\newtheorem{theorem}{Theorem}[section]
\newtheorem{lemma}[theorem]{Lemma}
\newtheorem{corollary}[theorem]{Corollary}
\newtheorem{proposition}[theorem]{Proposition}
\theoremstyle{definition}
\newtheorem{example}[theorem]{Example}
\newtheorem{definition}[theorem]{Definition}
\newtheorem{definition-lemma}[theorem]{Definition-Lemma}
\newtheorem{definition-theorem}[theorem]{Definition-Theorem}
\newtheorem*{notation}{Notations}
\title{Derived Morita invariance of Calabi-Yau algebras}
\author{Sirui Yu}
\author{Jieheng Zeng}
\address{Yu and Zeng: School of Mathematics, Sichuan University, Chengdu, 
Sichuan Province, 610064 P.R. China}
\email{banaenoptera@163.com, 
zengjh662@163.com}
\date{}
\begin{document}

\begin{abstract}
We study the derived equivalence of Calabi-Yau algebras and
show that, for two derived Morita equivalent algebras, if one is Calabi-Yau, then so is the other.

\noindent\textbf{Keywords:}
Derived equivalence, Calabi-Yau algebra

\noindent\textbf{MSC 2020:} 16E35, 18G80.
\end{abstract}
\maketitle
\setcounter{tocdepth}{1}\tableofcontents

\section{Introduction}
 In 1989, J. Rickard \cite{R} defined derived equivalence of
two algebras and built the derived Morita theory. Later in \cite{JR}, he showed that for two
derived equivalent finite dimensional algebras, if one is a symmetric algebra, then so is the other.

Symmetric algebras are closely related to Calabi-Yau algebras, a notion introduced by Ginzburg in \cite{G}.
In fact, Van den Bergh showed in \cite{VdB}
that if a Calabi-Yau algebra is Koszul, then its Koszul dual algebra is symmetric.
It is then natural to ask whether for two
derived Morita equivalent algebras, not necessarily Koszul, if one is Calabi-Yau, so is the other.
It turns out that the answer is yes. The main result of this paper is as follows:

\begin{theorem}\label{MainThm}
Let $A$ and $B$ be two $k$-algebras, where $k$ is a field of characteristic zero. If $A$ is a $d$-Calabi-Yau algebra, where $d \in \mathbb{N}$, and is derived equivalent to $B$,
 then $B$ is also a $d$-Calabi-Yau algebra.
\end{theorem}
The rest of the paper is devoted to the proof of the above theorem. It is organized as follows:

In \S 2 we recall some notions of the derived category of an abelian category, the derived functor, the triangulated category and some of their basic properties.

 In \S 3, we first remind the derived Morita theory and tilting theory developed by Rickard (\cite{R,JR}), and then prove Theorem \ref{MainThm}.

Throughout the paper, $k$ is a field of characteristic zero, all algebras are unital $k$-algebras. All modules are left modules unless otherwise stated and complexes are cochain complexes.

\section{Derived categories }

 \ \ \ \ In this section we recall some basic notions of the derived category. They are mostly taken from \cite{SG,CW}.

Derived category was originally introduced by Verdier. Nowadays it plays an increasingly important role in the study of homological algebra, algebraic geometry and many other fields.

Suppose $\mathcal{A}$ is an abelian category, let $Kom(\mathcal{A})$ be the category of cochain complexes over $\mathcal{A}$ and $X^{\bullet}, Y^{\bullet}$ be objects in $Kom(\mathcal{A})$.
For two morphisms $f,g \in \mathrm{Hom}_{Kom(\mathcal{A})}(X^{\bullet}, Y^{\bullet})$,
we say $f$ and $g$ are homotopy equivalent if there is a morphism
$s \in \mathrm{Hom}_{Kom(\mathcal{A})}(X^{\bullet}, Y[-1]^{\bullet})$ such that
$\delta \circ s + s \circ d = f - g$, where $d$ is the differential of $X^{\bullet}$ and $\delta$ is the differential of $Y^{\bullet}$.
For a complex $K^{\bullet}$, we denote by $K[i]^{\bullet}$ the complex with components $K[i]^{j} = K^{i+j}$ and $(-1)^{i} d_{K^{\bullet}}$ the corresponding differential.

\begin{definition}[Homotopy category]
Let $\mathcal{A}$ be an abelian category. The {\it homotopy category} $\mathcal{K}(\mathcal{A})$ is defined as follows:
the objects in $ \mathcal{K}(\mathcal{A})$ are the objects in $Kom(\mathcal{A})$,
and
the morphisms in $\mathcal{K}(\mathcal{A})$
are the morphisms in $Kom(\mathcal{A})$ modulo homotopy equivalences.
\end{definition}


 \begin{definition}[Derived category]
Let $\mathcal{A}$ be an abelian category and $Kom(\mathcal{A})$ be the category of complexes over $\mathcal{A}$.
Then there exists a category $\mathcal{D}(\mathcal{A})$, called the {\it derived category} of $\mathcal A$,
and a functor $Q:Kom(\mathcal{A}) \rightarrow \mathcal{D}(\mathcal{A})$ such that:

(1) $Q(f)$ is an isomorphism for any quasi-isomorphism $f$, i.e., any morphism of complexes inducing isomorphism on cohomology.

(2) Any functor $F: Kom(\mathcal{A}) \longrightarrow \mathcal{D}$ that
sends quasi-isomorphisms to isomorphisms can be uniquely factored through
$\mathcal{D}(\mathcal{A})$, i.e., there exists a unique functor
$G: \mathcal{D}(\mathcal{A}) \longrightarrow \mathcal{D}$ such that the following diagram
$$
\xymatrixcolsep{4pc}
\xymatrix{
Kom(\mathcal{A}) \ar[d]_{Q} \ar[r]^{F} & \mathcal{D} \\
\mathcal{D}(\mathcal{A}) \ar@{.>}[ur]_{\exists !G} &
                        }
$$
commutes.
\end{definition}

Equivalently, the derived category $\mathcal{D}(\mathcal A)$ of $\mathcal{A}$
is the category whose objects are objects in $\mathcal{K}(\mathcal{A})$ and
whose morphisms are morphisms of $\mathcal{K}(\mathcal{A})$ localized at quasi-isomorphisms.
Let $\mathcal{A}$ be an abelian category. For any $M, N \in \mathcal{D}(\mathcal{A})$, if $f \in \mathrm{Hom}_{\mathcal{K}(\mathcal{A})}(M, N)$ is a quasi-isomorphism, we denote by $f^{-1} \in \mathrm{Hom}_{\mathcal{D}(\mathcal{A})}(N,M)$ its inverse. The localization of homotopy category by quasi-isomorphisms is canonically isomorphic to the derived category. Strictly speaking, we have the following proposition and definition.

\begin{proposition}[See \cite{SG}, \S3.4.4-3.4.5]
Let $\mathcal{A}$ be an abelian category. For any $M, N \in \mathcal{D}(\mathcal{A})$, and $g \in \mathrm{Hom}_{\mathcal{D}(\mathcal{A})}(M, N)$, there is a morphism $f \in \mathrm{Hom}_{\mathcal{K}(\mathcal{A})}(M, N)$ and a quasi-isomorphism $s \in \mathrm{Hom}_{\mathcal{K}(\mathcal{A})}(M, N)$ such that $g = f \circ s^{-1}$ in $\mathcal{D}(\mathcal{A})$. The same is true for $\mathcal{D}^{*}(\mathcal{A})$ and $\mathcal{K}^{*}(\mathcal{A})$, where $*=+,-,b,\text{or }\emptyset$.
\end{proposition}

\begin{definition}[Localization functor]
Let $\mathcal{A}$ be an abelian category. The {\it localization functor associated to $\mathcal{K}(\mathcal{A})$}, denoted by $Q_{\mathcal{A}}: \mathcal{K}(\mathcal{A}) \longrightarrow \mathcal{D}(\mathcal{A})$ is defined as follows:
For any $M \in \mathcal{K}(\mathcal{A})$, $Q_{\mathcal{A}}(M) = M$ in $\mathcal{D}(\mathcal{A})$,
and for any $M, N \in \mathcal{K}(\mathcal{A})$, the functor  \[Q_{\mathcal{A}}(M, N) : \mathrm{Hom}_{\mathcal{K}(\mathcal{A})}(M, N) \longrightarrow \mathrm{Hom}_{\mathcal{D}(\mathcal{A})}(Q_{\mathcal{A}}(M), Q_{\mathcal{A}}(N)),\] is given by
$$
 Q_{\mathcal{A}}(M, N)(h) = h \circ (id_{M})^{-1},
$$
for any $h \in \mathrm{Hom}_{\mathcal{K}(\mathcal{A})}(M, N)$.
\end{definition}

\begin{notation}
Let $\mathcal{A}$ be an abelian category, denote by $Kom^+(\mathcal{A})$ (resp. $Kom^-(\mathcal{A})$)
the subcategory of
$Kom(\mathcal A)$ such that for any $M$ in $Kom^+(\mathcal{A})$ (resp. $Kom^-(\mathcal{A})$),  $M^{i} = 0$ for $i \ll 0  $
(resp. $i\gg 0$),
and
$Kom^b(\mathcal{A})=Kom^+(\mathcal{A}) \bigcap Kom^-(\mathcal{A})$.

Similarly, the subcategories
$\mathcal{K}^+(\mathcal{A})$, $\mathcal{K}^-(\mathcal{A})$, $\mathcal{K}^b(\mathcal{A})$
and
$\mathcal{D}^+(\mathcal{A})$, $\mathcal{D}^-(\mathcal{A})$, $\mathcal{D}^b(\mathcal{A})$ in
$\mathcal K(\mathcal A)$ and $\mathcal D(\mathcal A)$ respectively are defined similarly.
 \end{notation}

\begin{example}
For any $k$-algebra $A$, the category $A$-Mod of all $A$-modules with
morphisms being the module homomorphisms is an abelian category. The derived category $\mathcal{D}(A)$ of $\mathcal{A}$ has objects the complexes of
$A$-modules, and morphisms are obtained from morphisms
in the homotopy category of $A$-Mod by formally inverting all quasi-isomorphisms. 
\end{example}

 A functor $F: \mathcal{A}\rightarrow \mathcal{B}$ between two categories  is said to be an {\it equivalence} of categories if there exists a functor $G:\mathcal{B}\rightarrow\mathcal{A}$ such that the functor $G\circ F$ is isomorphic to $\mathrm{Id}_\mathcal{A} $ and the functor $F\circ G$ is isomorphic to $\mathrm{Id}_\mathcal{B} $. The category $\mathcal{A}$ and $\mathcal{B}$ are said to be equivalence.

\begin{proposition}[See \cite{CW}, Theorem 10.4.8]\label{Prop:CW1111}
Let $A$ be a $k$-algebra and $\mathcal{A}$ be the category of $A$-Mod.
Then the category $\mathcal{D}^{-}(\mathcal{A})$ is equivalent to $\mathcal{K}^{-}(Proj-A)$,
where $\mathcal{K}(Proj-A)$ is the homotopy category of the category of all projective $A$-modules.
\end{proposition}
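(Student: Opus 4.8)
The plan is to produce the natural comparison functor and then show it is an equivalence by verifying essential surjectivity and full faithfulness separately; the two technical pillars are the existence of projective resolutions of bounded-above complexes and the fact that such resolutions are homotopically projective. I would define the functor $F\colon \mathcal{K}^{-}(Proj\text{-}A) \to \mathcal{D}^{-}(\mathcal{A})$ as the composite of the inclusion $\mathcal{K}^{-}(Proj\text{-}A) \hookrightarrow \mathcal{K}^{-}(\mathcal{A})$ with the canonical localization $Q\colon \mathcal{K}^{-}(\mathcal{A}) \to \mathcal{D}^{-}(\mathcal{A})$ furnished by the derived category.

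For essential surjectivity, I would show that every $X^{\bullet} \in \mathcal{K}^{-}(\mathcal{A})$ admits a quasi-isomorphism $P^{\bullet} \xrightarrow{\sim} X^{\bullet}$ with $P^{\bullet} \in \mathcal{K}^{-}(Proj\text{-}A)$. Since the category of $A$-modules has enough projectives (every module is a quotient of a free module), such a $P^{\bullet}$ can be built by descending induction on the cohomological degree: starting from the top nonzero degree of $X^{\bullet}$, one chooses surjections from projective modules degree by degree and assembles the differentials so that the comparison map is a quasi-isomorphism. Because $Q$ inverts quasi-isomorphisms, $F(P^{\bullet}) \cong X^{\bullet}$ in $\mathcal{D}^{-}(\mathcal{A})$, so $F$ is essentially surjective.

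For full faithfulness, the key lemma I would isolate is the \emph{homotopical projectivity} statement: if $P^{\bullet} \in \mathcal{K}^{-}(Proj\text{-}A)$ and $N^{\bullet}$ is an acyclic complex, then $\operatorname{Hom}_{\mathcal{K}(\mathcal{A})}(P^{\bullet}, N^{\bullet}) = 0$. This is proved by constructing a contracting homotopy inductively from the top degree, using the projectivity of each $P^{i}$ together with the exactness of $N^{\bullet}$ to lift the successive maps. Two consequences follow. First, every quasi-isomorphism between objects of $\mathcal{K}^{-}(Proj\text{-}A)$ is a homotopy equivalence, since its mapping cone is an acyclic bounded-above complex of projectives and hence contractible. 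Second, using the description of $\mathcal{D}^{-}(\mathcal{A})$ via calculus of fractions, any morphism $P^{\bullet} \to Q^{\bullet}$ in $\mathcal{D}^{-}(\mathcal{A})$ is represented by a roof $P^{\bullet} \xleftarrow{s} Z^{\bullet} \xrightarrow{g} Q^{\bullet}$ with $s$ a quasi-isomorphism; after replacing $Z^{\bullet}$ by a bounded-above projective resolution and applying the lemma, $s$ becomes a homotopy equivalence, so the roof is equivalent to an honest homotopy class $P^{\bullet} \to Q^{\bullet}$. The same lemma shows that two such classes agreeing in $\mathcal{D}^{-}(\mathcal{A})$ already agree in $\mathcal{K}^{-}(Proj\text{-}A)$. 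Hence $F$ is fully faithful, and together with essential surjectivity this gives the claimed equivalence.

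The main obstacle I anticipate is the homotopical projectivity lemma and its inductive bookkeeping: one must organize the lifting of the partial homotopies so that the induction is well founded, which is exactly where boundedness above of $P^{\bullet}$ is essential, since it provides the base case at the top degree and prevents the induction from running away to $-\infty$. Once this lemma and the resulting ``quasi-isomorphism $\Rightarrow$ homotopy equivalence'' principle are in hand, the calculus-of-fractions manipulations are routine.
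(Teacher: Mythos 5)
Your sketch is correct, but note that the paper itself gives no proof of this proposition: it is quoted verbatim from the cited reference (Weibel, \emph{An introduction to homological algebra}, Theorem 10.4.8), whose argument is essentially the one you reconstruct --- existence of bounded-above projective resolutions for essential surjectivity, plus the lemma that a bounded-above complex of projectives admits no nonzero homotopy classes of maps to an acyclic complex (so quasi-isomorphisms between such complexes are homotopy equivalences and every roof straightens), for full faithfulness. So your proposal matches the standard proof behind the citation, including the correct identification of where boundedness above is used as the base of the homotopy-lifting induction.
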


We next introduce Rickard's definition of derived Morita equivalence which is an equivalence between triangulated categories. To this end we introduce the concept of a triangulated category.
\begin{definition}[Triangle]
Let $\mathcal{A}$ be an additive category together with an self-equivalence
$\Sigma :
 \mathcal{A} \longrightarrow \mathcal{A}$
 (called the translation functor). A sextuple $(A, B, C, u, v, w)$ is called a {\it triangle}
in $\mathcal{A}$ if $A, B, C$ are objects in $\mathcal{A}$,
and $u, v, w$ are morphisms as follows: $u: A \longrightarrow B$,
$v: B \longrightarrow C$, $w: C \longrightarrow \Sigma^{-1}(A)$.
 A triangle is usually written as
$$
\xymatrix{
                & C \ar[dl]_{w}             \\
 A \ar[rr]_{u} & &     B \ar[ul]_{v} .       }
$$
A morphism of triangles $(A, B, C, u, v, w) \longrightarrow (A^{'}, B^{'}, C^{'}, u^{'}, v^{'}, w^{'})$ is a triple $(f,g,h)$ where $f\in Mor_{\mathcal{A}}(A,A^{'}) ,g\in Mor_{\mathcal{A}}(B,B^{'}) $ and $h\in Mor_{\mathcal{A}}(C,C^{'}) $ such that the squares in the diagram
$$
\xymatrix{
  A \ar[d]_{f} \ar[r]^{u}
   & B \ar[d]_{g} \ar[r]^{v}
    & C \ar[d]_{h} \ar[r]^{w}
          & \Sigma^{-1}A  \ar[d]_{\Sigma^{-1}(f)}     \\
  A^{'}  \ar[r]_{u^{'}}
                & B^{'} \ar[r]_{v^{'}}
                & C^{'} \ar[r]_{w^{'}}
                &\Sigma^{-1}A^{'}         }
$$
are commutative.
\end{definition}

\begin{definition}[Triangulated category]
An additive category $\mathcal{A}$ is called a {\it triangulated category} if it is equipped with
a translation functor and a distinguished family of triangles $(A, B, C, u, v, w)$, called the distinguished triangles in $\mathcal{A}$,
subject to the following four axioms:

(TR1) Every morphism $f : A \longrightarrow B$ in $\mathcal{A}$ can be embedded in a distinguished triangle $(A,B,C,f,v,w)$.
If $A = B$ and $C = 0$, then the triangle $(A,A,0,id_{A}, 0, 0)$ is a distinguished triangle. If  $(A, B, C, u, v, w) $ is isomorphic to a distinguished triangle $(A^{'}, B^{'}, C^{'}, u^{'}, v^{'}, w^{'})$, then $(A, B, C, u, v, w) $ is also a distinguished triangle.

(TR2) If $(A, B, C, u, v, w)$ is a distinguished triangle, then $(B, C, \Sigma^{-1}(A), v, w, - \Sigma^{-1}(u))$ and $(\Sigma(C),A, B,- \Sigma (w), u, v)$
are also distinguished triangles.

(TR3) Given two distinguished triangles

$$
\xymatrix{
                & C \ar[dl]_{w}             \\
 A \ar[rr]_{u} & &     B \ar[ul]_{v}        }
\quad\mbox{and}\quad
\xymatrix{
                & C^{'} \ar[dl]_{w^{'}}             \\
 A^{'} \ar[rr]_{u^{'}} & &     B^{'} \ar[ul]_{v^{'}}        }
$$
with morphism $f : A \longrightarrow A^{'}, g: B \longrightarrow B^{'}$ such that $g \circ u = u^{'} \circ f$, there exists a morphism $h: C \longrightarrow C^{'}$ such that $(f, g, h)$ is a morphism of triangles

$$
\xymatrix{
  A \ar[d]_{f} \ar[r]^{u}
   & B \ar[d]_{g} \ar[r]^{v}
    & C \ar[d]_{h} \ar[r]^{w}
          & \Sigma^{-1}A   \ar[d]_{\Sigma^{-1}(f)}     \\
  A^{'}  \ar[r]_{u^{'}}
                & B^{'} \ar[r]_{v^{'}}
                & C^{'} \ar[r]_{w^{'}}
                & \Sigma^{-1}A^{'}.      }
$$

(TR4) Given distinguished triangles
\begin{eqnarray*}
(A, B, C^{'}, u, j, \varphi_{1}), \ (B, C, A^{'}, v, \phi_{1}, i) \text{ and }(A, C, B^{'}, vu, \phi_{2}, \varphi_{2})
\end{eqnarray*}
as in the following octohedron
$$\xymatrixcolsep{5pc}
\xymatrix{
 &  B^{'} \ar@{.>}[dr]^{g} \ar[ddl]^{\varphi_{2}}
  &  \\
    C^{'} \ar@{.>}[ur]^{f} \ar[d]_{\varphi_{1}}
     &
       & A^{'} \ar[ddl]_{i} \ar[ll]_{\Sigma^{-1}(j)i} \\
    A \ar[rr]_{uv} \ar[dr]_{u}
     &
      & C \ar[u]_{\phi_{1}} \ar[uul]^{\phi_{2}}  \\
   &  B \ar[uul]_{j} \ar[ur]_{v}
                                               &   }
$$
there exist morphisms $f: C^{'} \longrightarrow B^{'}$ and $g: B^{'} \longrightarrow A^{'}$ such that
$$
(C^{'}, B^{'}, A^{'}, f, g, \Sigma^{-1}(j)i)
$$
is a distinguished triangle, and the two other faces of the octohedron with $f, g$ as edges are commutative diagrams.
\end{definition}
Actually, a wide class of categories are  triangulated categories.\begin{proposition}[See \cite{RH}, Propositions 3.2 and 4.2]
Let $\mathcal{A}$ be an abelian category. Then
$\mathcal{K}(\mathcal{A})$ and $\mathcal{D}(\mathcal{A})$ are both triangulated categories.
\end{proposition}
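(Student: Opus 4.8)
\section*{Proof proposal}

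The plan is to treat the two categories separately: first equip the homotopy category $\mathcal{K}(\mathcal{A})$ with a triangulated structure by hand, and then transport it along the localization functor $Q$ to $\mathcal{D}(\mathcal{A})$. For $\mathcal{K}(\mathcal{A})$ I take the translation functor $\Sigma$ to be the shift of complexes, so that in the notation of the first definition $\Sigma^{-1}(X^{\bullet}) = X[1]^{\bullet}$; this is an automorphism of $\mathcal{K}(\mathcal{A})$ because homotopies shift together with the complexes they connect. I then declare the distinguished triangles via mapping cones: for a chain map $f: X^{\bullet} \to Y^{\bullet}$ I form $C(f)$ with $C(f)^{n} = X^{n+1} \oplus Y^{n}$ and differential
$$
d_{C(f)} = \begin{pmatrix} -d_{X} & 0 \\ f & d_{Y} \end{pmatrix},
$$
together with the canonical inclusion $\iota : Y^{\bullet} \to C(f)$ and projection $\pi : C(f) \to X[1]^{\bullet}$. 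A triangle of $\mathcal{K}(\mathcal{A})$ is called \emph{distinguished} if it is isomorphic, in $\mathcal{K}(\mathcal{A})$, to one of the form $(X^{\bullet}, Y^{\bullet}, C(f), f, \iota, \pi)$.

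With this data, TR1 is immediate: every $f$ sits in its cone triangle, the triangle on $(\mathrm{id}_{A}, 0, 0)$ is a cone triangle up to isomorphism, and closure under isomorphism is built into the definition. For TR3 I would invoke functoriality of the cone: a square commuting up to homotopy induces an explicit block map $C(f) \to C(f')$ completing the morphism of triangles, with the chosen homotopy absorbed into an off-diagonal entry of the matrix. The genuine content among these three is TR2, the rotation axiom, which reduces to producing a homotopy equivalence $C(\iota) \simeq X[1]^{\bullet}$ that identifies the rotated cone triangle with the cone triangle of $\iota$; here I would write down the two mutually inverse chain maps and the connecting homotopy explicitly.

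The main obstacle is the octahedral axiom TR4. Given composable morphisms with composite $A \to C$ and the three cone triangles on $A$, $B$, $C$ listed in the statement, I must construct the maps $f : C' \to B'$ and $g : B' \to A'$ between cones and verify both that $(C', B', A', f, g, \Sigma^{-1}(j)i)$ is isomorphic to a cone triangle and that the remaining faces of the octahedron commute in $\mathcal{K}(\mathcal{A})$. I would do this by choosing the evident block maps on the cones and checking the required identities only up to explicit chain homotopy. The delicate point is bookkeeping of the signs introduced by the shift differential $(-1)^{i} d_{K^{\bullet}}$; no conceptual difficulty arises beyond this, but it is the step most prone to error.

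Finally, to obtain the triangulated structure on $\mathcal{D}(\mathcal{A})$ I would pass through the localization $Q : \mathcal{K}(\mathcal{A}) \to \mathcal{D}(\mathcal{A})$ at the class $S$ of quasi-isomorphisms. The key verification is that $S$ is a saturated multiplicative system compatible with the triangulation: it is stable under $\Sigma$, contains the identities and is closed under composition, satisfies the Ore (calculus of fractions) conditions, and is closed under passing to cones in morphisms of triangles. Granting this, I declare a triangle of $\mathcal{D}(\mathcal{A})$ distinguished when it is isomorphic to the image under $Q$ of a distinguished triangle of $\mathcal{K}(\mathcal{A})$; the translation functor descends because $S$ is $\Sigma$-stable, and each of TR1--TR4 in $\mathcal{D}(\mathcal{A})$ follows from its counterpart in $\mathcal{K}(\mathcal{A})$ together with the calculus of fractions, which lets every roof-diagram morphism be completed and every quasi-isomorphism be inverted inside a triangle. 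Checking that $S$ is an Ore system compatible with cones is where most of the remaining work lies, but this is precisely the hypothesis ensuring that Verdier's localization of a triangulated category is again triangulated.
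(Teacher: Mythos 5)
Your proposal is correct and follows essentially the same route as the paper's source: the paper gives no proof of its own, citing \cite{RH}, where the argument is exactly yours --- equip $\mathcal{K}(\mathcal{A})$ with mapping-cone triangles and verify TR1--TR4 directly (rotation via the homotopy equivalence $C(\iota) \simeq X[1]^{\bullet}$, TR3 and TR4 via explicit block maps with homotopies in off-diagonal entries), then transport the structure to $\mathcal{D}(\mathcal{A})$ by Verdier localization at the quasi-isomorphisms, which form a multiplicative system compatible with the triangulation. You have also correctly adapted to the paper's inverted shift convention, where the third map of a triangle lands in $\Sigma^{-1}A$, so that $\Sigma^{-1}$ is the shift $[1]$.
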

\begin{definition}[Triangulated functor]
	A functor $F:\mathcal{A}\rightarrow\mathcal{B}$ between two triangulated categories is called a {\it triangulated functor}, or {\it a functor of triangulated categories} if it commutes with translation functors and preserves the distinguished triangles.
\end{definition}

Now we introduce the definition of a derived functor, following Verdier (\cite{Ve} \S 2 Definition 1.2, Proposition 1.6). 
\begin{definition}[Derived functor]\label{derived functor}
Let $\mathcal{A}$ and $\mathcal{B}$ be two abelian categories. Suppose $F: \mathcal{K}(\mathcal{A}) \longrightarrow \mathcal{K}(\mathcal{B})$ is a functor of triangulated categories. A {\it right derived functor} of $F$ is a functor $RF : \mathcal{D}(\mathcal{A}) \longrightarrow \mathcal{D}(\mathcal{B})$ of triangulated categories, together with a natural transformation $\xi$ from
$$
Q_{\mathcal{B}} \circ F : \mathcal{K}(\mathcal{A}) \longrightarrow \mathcal{K}(\mathcal{B}) \longrightarrow \mathcal{D}(\mathcal{B})
$$
to
$$
RF \circ Q_{\mathcal{A}} : \mathcal{K}(\mathcal{A}) \longrightarrow \mathcal{D}(\mathcal{A}) \longrightarrow \mathcal{D}(\mathcal{B}),
$$
which is universal in the sense that if $G : \mathcal{D}(\mathcal{A}) \longrightarrow \mathcal{D}(\mathcal{B})$ is another functor equipped with a natural transformation
$$
\zeta : Q_{\mathcal{B}} \circ F \Longrightarrow G \circ Q_{\mathcal{A}},
$$
then there exists a natural transformation
$$
\eta : RF \Longrightarrow G
$$
 such that $\zeta_{M} = \eta_{Q_{\mathcal{A}}(M)} \circ \xi_{M}$ in $\mathcal{D}(\mathcal{A})$.

 If $\mathcal{K}^{'} \subset \mathcal{K}(\mathcal{A})$ is a full triangulated subcategory,
 then there is a natural transformation from the right derived functor $R^{'}F$ on $\mathcal{D}^{'}(\mathcal{A})$, the corresponding derived category of $\mathcal{K}^{'}$,
 to the restriction of $RF$ on $\mathcal{D}(\mathcal{A})$.
 We will write $R^{*}F$ for the derived functors of $F$ on
 $ \mathcal{D}^{*}(\mathcal{A})$.

\vspace{3mm}

 Similary, a left derived functor of $F$ is a functor $LF : \mathcal{D}(\mathcal{A}) \longrightarrow \mathcal{D}(\mathcal{B})$ of triangulated categories, together with a natural transformation
 $$
 \xi : LF \circ Q_{\mathcal{A}} \Longrightarrow Q_{\mathcal{B}} \circ F
 $$
  satisfying the dual universal property.
\end{definition}

\begin{proposition}[\cite{RH}, Theorem 5.1]
Let $\mathcal{A}$ and $\mathcal{B}$ be two abelian categories. Suppose there exists
an exact functor $F : \mathcal{K}^{-}(\mathcal{A}) \longrightarrow \mathcal{K}^{-}(\mathcal{B})$
and a triangulated subcategory $\mathcal{L} \subseteq \mathcal{K}^{-}(\mathcal{A})$ such that
\begin{enumerate}
\item[$(1)$] for any object $M \in \mathcal{K}^{-}(\mathcal{A})$, there exists
an object $L \in \mathcal{L}$ and a quasi-isomorphism
$$
i_{M}: L \rightarrow M;
$$
\item[$(2)$] if $I \in \mathcal{L}$ is acyclic, then $F(I)$ is also acyclic.
\end{enumerate}
Then $F$ has a left derived functor
$\mathcal{D}(F):\mathcal{D}^{-}(\mathcal{A}) \longrightarrow \mathcal{D}^{-}(\mathcal{B})$.
Furthermore, for any $M\in \mathcal{K}^{-}(\mathcal{A})$,
$$
\mathcal{D}(F) \circ Q_{\mathcal{A}} (M) = Q_{\mathcal{B}} \circ F (J),
$$
where $J$ is in $\mathcal{L}$ such that there is a quasi-isomorphism $i:J\rightarrow M$,
$Q_{\mathcal{A}}: \mathcal{K}(\mathcal{A}) \longrightarrow \mathcal{D}(\mathcal{A})$
is the localization functor associated to $\mathcal{K}(\mathcal{A})$, and
$Q_{\mathcal{B}}: \mathcal{K}(\mathcal{B}) \longrightarrow \mathcal{D}(\mathcal{B})$
is the localization functor associated to $\mathcal{K}(\mathcal{B})$.
\end{proposition}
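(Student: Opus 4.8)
The plan is to construct $\mathcal{D}(F)$ by resolving objects inside $\mathcal{L}$, applying $F$, and then localizing, exactly as the asserted formula suggests. Concretely, for each $M \in \mathcal{K}^{-}(\mathcal{A})$ I would invoke hypothesis (1) to choose an object $L_{M} \in \mathcal{L}$ together with a quasi-isomorphism $i_{M} : L_{M} \to M$, and set $\mathcal{D}(F)(M) := Q_{\mathcal{B}}\bigl(F(L_{M})\bigr)$. The entire difficulty is then to show that this assignment is independent of the chosen resolution up to a canonical isomorphism, that it extends functorially to the morphisms of $\mathcal{D}^{-}(\mathcal{A})$, and that the resulting functor is genuinely the left derived functor in the sense of the definition above. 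I would organize the argument around two facts: that $F$ carries quasi-isomorphisms between objects of $\mathcal{L}$ to quasi-isomorphisms, and that localizing $\mathcal{L}$ at its own quasi-isomorphisms reproduces $\mathcal{D}^{-}(\mathcal{A})$.

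The first fact is the routine half. Given a quasi-isomorphism $f : L \to L'$ with $L, L' \in \mathcal{L}$, I would embed $f$ in a distinguished triangle of $\mathcal{K}^{-}(\mathcal{A})$ using (TR1). Since $\mathcal{L}$ is a full triangulated subcategory, the third vertex $C$ again lies in $\mathcal{L}$, and because $f$ is a quasi-isomorphism the long exact cohomology sequence forces $C$ to be acyclic. Applying the triangulated functor $F$ produces a distinguished triangle in $\mathcal{K}^{-}(\mathcal{B})$ on $F(f)$ with third vertex $F(C)$, and $F(C)$ is acyclic by hypothesis (2); reading the cohomology sequence backwards shows that $F(f)$ is a quasi-isomorphism. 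Consequently, writing $S$ for the class of quasi-isomorphisms of $\mathcal{L}$, the composite $Q_{\mathcal{B}} \circ F$ sends the elements of $S$ to isomorphisms and therefore factors through the Verdier localization $\mathcal{L}[S^{-1}]$, yielding a triangulated functor $\bar{F} : \mathcal{L}[S^{-1}] \to \mathcal{D}^{-}(\mathcal{B})$.

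The main obstacle is the second fact: that the functor $j : \mathcal{L}[S^{-1}] \to \mathcal{D}^{-}(\mathcal{A})$ induced by the inclusion $\mathcal{L} \hookrightarrow \mathcal{K}^{-}(\mathcal{A})$ is an equivalence. Essential surjectivity is immediate from hypothesis (1), since each $i_{M}$ becomes an isomorphism in $\mathcal{D}^{-}(\mathcal{A})$. Full faithfulness is the real work and is where I expect to spend most of the effort: a morphism in $\mathcal{D}^{-}(\mathcal{A})$ is represented by a roof $M \xleftarrow{s} M'' \to N$ with $s \in S$, and I must show that every such roof, together with every homotopy relation between roofs, can be replaced by one whose vertices lie in $\mathcal{L}$. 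This is a calculus-of-fractions argument: I would resolve $M''$ and $N$ inside $\mathcal{L}$ by (1), lift the structure maps through these resolutions using the quasi-isomorphism-detection established in the previous paragraph, and verify that two roofs identified in $\mathcal{D}^{-}(\mathcal{A})$ stay identified after resolution. Granting this equivalence, I define $\mathcal{D}(F) := \bar{F} \circ j^{-1}$, whose value on objects is $\mathcal{D}(F)\bigl(Q_{\mathcal{A}}(M)\bigr) = Q_{\mathcal{B}} F(L_{M})$, which is precisely the claimed formula with $J^{\bullet} = L_{M}$.

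Finally I would produce the comparison transformation and check universality. For $M \in \mathcal{K}^{-}(\mathcal{A})$ I set $\xi_{M} := Q_{\mathcal{B}}\bigl(F(i_{M})\bigr) : \mathcal{D}(F)\bigl(Q_{\mathcal{A}} M\bigr) = Q_{\mathcal{B}} F(L_{M}) \to Q_{\mathcal{B}} F(M)$; since $i_{M}$ need not land in $\mathcal{L}$, the map $\xi_{M}$ is only a morphism and not an isomorphism, as one expects of a left derived functor. Naturality of $\xi$ follows from the functoriality established above. For universality, given any competing pair $(G,\zeta)$ with $\zeta : G \circ Q_{\mathcal{A}} \Rightarrow Q_{\mathcal{B}} \circ F$, I would define $\eta_{Q_{\mathcal{A}} M} := \zeta_{L_{M}} \circ G\bigl(Q_{\mathcal{A}}(i_{M})\bigr)^{-1}$, which makes sense because $Q_{\mathcal{A}}(i_{M})$ is an isomorphism in $\mathcal{D}^{-}(\mathcal{A})$; naturality of $\zeta$ applied to $i_{M}$ then gives $\xi_{M} \circ \eta_{Q_{\mathcal{A}} M} = \zeta_{M}$, the required factorization, while uniqueness follows by taking $M = L \in \mathcal{L}$ with the identity resolution, where $\xi_{L}$ is an isomorphism and hence forces $\eta$ on the (essentially surjective) image of $\mathcal{L}$. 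The boundedness statement $\mathcal{D}(F) : \mathcal{D}^{-}(\mathcal{A}) \to \mathcal{D}^{-}(\mathcal{B})$ is automatic, since $F$ already takes values in $\mathcal{K}^{-}(\mathcal{B})$.
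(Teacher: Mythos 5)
Your proposal is correct and takes essentially the same route as the paper, which gives no proof of its own but defers to the source it cites (Hartshorne, \emph{Residues and Duality}, I.5.1): the standard adapted-subcategory argument, namely that $F$ preserves quasi-isomorphisms between objects of $\mathcal{L}$ (via the cone and hypothesis (2)), that the localization $\mathcal{L}[S^{-1}] \to \mathcal{D}^{-}(\mathcal{A})$ is an equivalence (via hypothesis (1)), and the definition $\mathcal{D}(F) = \bar{F} \circ j^{-1}$ with the universal property verified exactly as you do. The one piece you leave at sketch level, the full faithfulness of $j$ by replacing roofs with roofs whose vertex lies in $\mathcal{L}$, is indeed where the technical work sits, and your outline of it is the standard and correct one.
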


Now, let us recall some special derived functors, which will be used in \S3. First, we recall the definition of the enveloping algebra $A^e$ for convenience.
\begin{definition}
Let $A$ be a $k$-algebra, denote by $1_{A} \in A$ the unit in $A$ and by $m_{A} : A \otimes_{k} A \longrightarrow A$ the multiplication.

The opposite algebra $A^{op}$ of $A$ is defined to be the same vector space of $A$ endowed with the multiplication $m_{A^{op}}(a \otimes_{k} b) = m_{A}(b \otimes_{k} a)$, for any $a, b \in A^{op}$, and the unit $1_{A^{op}} = 1_{A} \in A^{op}$.

Let $A$ and $B$ be two $k$-algebras. There is an algebra structure on the vector space $A \otimes_{k} B$ such that its multiplication is given by
$$
m_{A \otimes_{k} B}((a_{1} \otimes b_{1}) \otimes_{k} (a_{2} \otimes b_{2})): = m_{A}(a_{1} \otimes_{k} a_{2}) \otimes_{k} m_{B}(b_{1} \otimes_{k} b_{2}),
$$
for any $a_{1}, a_{2} \in A$ and $b_{1}, b_{2} \in B$, and its unit is $1_{A} \otimes_{k} 1_{B} \in A \otimes_{k} B$, where $1_{A}$ is the unit of $A$, $1_{B}$ is the unit of  $B$, we denote by $A^{e}$ the $k$-algebra $A \otimes_{k} A^{op}$.
\end{definition}

Let $A$, $B$, $C$ be $k$-algebras. Suppose $N$ is a complex of $B \otimes_{k} A^{op}$-modules in $\mathcal{K}^{-}(B \otimes_{k} A^{op})$ and
$M$ is a complex of $A \otimes_{k} C^{op}$-modules in $\mathcal{K}^{-}(A \otimes_{k} C^{op})$.
Then the functor $N \otimes_{A}^{L}(-) : \mathcal{K}^{-}(A) \longrightarrow \mathcal{K}^{-}(B)$ in homotopy categories takes any acyclic object in $\mathcal{K}^{-}(A)$ to an acyclic object in $\mathcal{K}^{-}(B)$.

 Thus, we obtain that the functor $N \otimes_{A}^{L}(-) : \mathcal{K}^{-}(A) \longrightarrow \mathcal{K}^{-}(B)$ can be lifted  to be a derived functor between the associated derived categories.

 Analogously, $\mathrm{Hom}_{\mathcal{K}^{-}(A-Mod)}(M,-) : \mathcal{K}^{-}(A) \longrightarrow \mathcal{K}^{-}(C)$ can also be lifted to be a derived functor, which is denoted by
$$
\mathrm{RHom}_{A}(M, -) : \mathcal{D}^{-}(A) \longrightarrow \mathcal{D}^{-}(C),
$$
 between the associated derived categories. Using this method, we lift more derived functors from homotopy categories (\cite[Charter 1]{RH}, \cite[Charter III.6]{SG}).

We conclude this section by recalling the following concept of derived equivalence.
\begin{definition}[Triangulated equivalence and derived equivalence]
	 Let $\mathcal{A}$ and $\mathcal{B}$ be two triangulated categories. If there is an equivalence functor $F:\mathcal{A}\rightarrow\mathcal{B}$, then $\mathcal{A}$ and $\mathcal{B}$ are said to be {\it triangulated equivalent} or  $\mathcal{A}$ and $\mathcal{B}$ are {\it equivalent as triangulated categories}. Furthermore, if the functor $F$ is a derived functor, $\mathcal{A}$ and $\mathcal{B}$ are said to be {\it derived equivalent}.
\end{definition}
Since the derived functor in Definition \ref{derived functor} is always  a functor of triangulated categories, we have the following corollary:
\begin{corollary}[See \cite{SG} \S 3.6.8]
	A derived equivalence is also a triangulated equivalence.\end{corollary}
	
\section{Derived Morita theory and proof of the Theorem \ref{MainThm}}

In this section, we first collect some facts on derived Morita theory,
initiated by Rickard in \cite{R,JR}. Then we use his results to prove the main theorem. Let $P_A$ denote the category of all finitely generated projective $A$-modules.

\begin{definition}[\cite{R} Theorem 6.4 and Definition 6.5]\label{derived equivalence}
Let $A$ and $B$ be two $k$-algebras. They are called {\it derived Morita equivalent} (or {\it
derived equivalent}
for short) if $A$ and $B$ satisfy one of the following equivalent conditions:
\begin{enumerate}
	\item[$(1)$] $\mathcal{K}^{-}(Proj-A)$ and $\mathcal{K}^{-}(Proj-B)$ are equivalent as triangulated categories;
	
	\item[$(2)$] $\mathcal{D}^{b}(A)$ and $\mathcal{D}^{b}(B)$ are equivalent as triangulated categories;
	
	\item[$(3)$]$\mathcal{K}^{b}(Proj-A)$ and $\mathcal{K}^{b}(Proj-B)$ are equivalent as triangulated categories;
	
	\item[$(4)$] $\mathcal{K}^{b}(P_A)$ and $\mathcal{K}^{b}(P_B)$ are equivalent as triangulated categories;
	
\item[$(5)$]$B$ is isomorphic to $\mathrm{End}(T)$, where $T$ is an object in $\mathcal{K}^b(P_A)$ satisfying

(a) $\mathrm{Hom}(T,T[i])=0$ for $i\neq 0$ and
 
(b) the category $add(T)$ of direct summands of finite direct sums of copies of $T$, generates $\mathcal{K}^b(P_A)$ as a triangulated category.
\end{enumerate}
\end{definition}

Let us recall the definition of a Calabi-Yau algebra.

\begin{definition}[Ginzburg \cite{G}]\label{Def:CYalgebra}
A $k$-algebra $A$ is called $d$-Calabi-Yau if $A$, viewed as an $A^e$-module,
has a bounded resolution of finitely generated projective $A^{e}$-modules,
and there is an isomorphism $\mathrm{RHom}_{A^{e}} (A, A\otimes_k A) \cong A[-d]$ in $\mathcal{D}(A^{e})$.
\end{definition}

In above definition, we have used the $A^e$-module structure on
$A \otimes_{k} A$ which is given by
$$
(a_{1} \otimes_{k} b_{1})\circ (a_{2} \otimes_{k} b_{2}) = a_{1}a_{2} \otimes_{k} b_{2}b_{1},
$$
for any $a_{1} \otimes_{k} b_{1} \in A^{e}$ and $a_{2} \otimes_{k} b_{2} \in A \otimes_{k} A$.
$A\otimes_k A$ also has a right $(A^{e})^{op}$-module (or equivalently left $A^e$-module) structure, which is given by
$$
(a_{1} \otimes_{k} b_{1})\ast (a_{2} \otimes_{k} b_{2}) = a_{1}b_{2} \otimes_{k} a_{2}b_{1},
$$
for any $a_{2} \otimes_{k} b_{2} \in (A^{e})^{op}$ and $a_{1} \otimes_{k} b_{1} \in A \otimes_{k} A$. The two structures are compatible so that $A \otimes_{k} A$ has an $A^{e} \otimes_{k} ((A^{e})^{op})$-module structure,
and the $A^e$-module structure on $\mathrm{R}\mathrm{Hom}_{A^e}(A,  A\otimes_k A)$
is induced by this second module structure on $A\otimes_k A$. 

 Now let us turn to the proof of our main result.
We divide our proof into two steps. First, we verify the following lemma:

\begin{lemma}\label{Lemma:1}
Let $A$ and $B$ be two $k$-algebras. Suppose they are derived equivalent. If there is an isomorphism
$$
\mathrm{RHom}_{A^{e}} (A,  A\otimes_k A) \cong A[-d]
$$
in $\mathcal{D}^b(A^{e})$, then we have
$$
\mathrm{RHom}_{B^{e}} (B,  B\otimes_k B) \cong B[-d]
$$
in $\mathcal{D}^b(B^{e})$.
\end{lemma}

The main techniques to prove this lemma are two propositions proved by Rickard in \cite{JR}.
\begin{proposition}[\cite{BK} Theorem 6.4; \cite{JR} Corollary 3.5 and Definition 4.2]\label{JR,BK}
Let $A$ and $B$ be two $k$-algebras. If they are derived equivalent, then there is a bounded complex $Q$ of $A \otimes_{k} B^{op}$-modules, a bounded complex $P$ of $B \otimes_{k} A^{op}$-modules which is isomorphic to $\mathrm{RHom}_{A}(Q, A)$ in $\mathcal{D}^b(B \otimes_{k} A^{op})$, an isomorphism in $\mathcal{D}^b(A^e)$:
$$
u: Q \bigotimes^\textsl{L}_\textrm{B} P \cong A,
$$
 an isomorphism in $\mathcal{D}^b(B^e)$:
 $$
 v: P \bigotimes^\textsl{L}_A Q \cong B,
 $$
and the composition is also a derived equivalence:
 $$
 P \otimes_{A}^{L} (-) \otimes_{A}^{L} Q: D^b(A^e) \longrightarrow D^b(B^e).
 $$
\end{proposition}

\begin{proposition}[\cite{JR} Theorem 4.4]\label{Prop:JR}
Let $A_{i}$ and $B_{i}$ be $k$-algebras ($i = 0, 1, 2$),
such that they are derived equivalent.
Suppose the derived equivalences are given by
$$Y_{i} \otimes_{A_{i}}^{L} (-)  : \mathcal{D}^b(A_{i}) \rightarrow \mathcal{D}^b(B_{i}),$$
where
$$
Y_{i} \cong \mathrm{RHom}_{A_{i}}(X_{i}, A_{i})
$$
 and
 $$
 X_{i} \in \mathcal{D}^{b}(A_{i} \otimes_{k} B_{i}^{op})
 $$
  $(i = 0, 1, 2)$ such that
  $
  Y_{i} \otimes_{A_{i}}^{L} X_{i} \cong B_{i}
  $
   in $\mathcal{D}^b(B_{i}^{e})$ and
   $
   X_{i} \otimes_{B_{i}}^{L} Y_{i} \cong A_{i}
   $
   in $\mathcal{D}^b(A_{i}^{e})$.

Then we have derived equivalences
$$Y_{i} \otimes_{A_{i}}^{L} (-) \otimes^{L}_{A_{j}}X_{j} : \mathcal{D}^{-}(A_{i} \otimes_{k} A_{j}^{op})
\longrightarrow \mathcal{D}^{-}(B_{i} \otimes B_{j}^{op}),$$ $(i, j = 0, 1, 2)$ and two commutative diagrams of functors:
$$\xymatrixcolsep{5pc}
\xymatrix {
     \mathcal{D}^{-}(A_{0} \otimes_{k} A_{1}^{op}) \times \mathcal{D}^{-}(A_{1} \otimes_{k} A_{2}^{op}) \ar[d]_{(Y_{0} \otimes_{A_{0}}^{L} (-) \otimes_{A_{1}}^{L}X_{1}) \times (Y_{1} \otimes_{A_{1}}^{L} (-) \otimes_{A_{2}}^{L}X_{2})}
     \ar[r]^{\quad  \quad \quad (-) \otimes^{L}_{A_{1}} (-)} & \mathcal{D}^{-}(A_{0} \otimes_{k} A_{2}^{op}) \ar[d]^{Y_{0} \otimes_{A_{0}}^{L} (-) \otimes^{L}_{A_{2}}X_{2}}  \\
     \mathcal{D}^{-}(B_{0} \otimes_{k} B_{1}^{op}) \times \mathcal{D}^{-}(B_{1} \otimes_{k} B_{2}^{op})   \ar[r]_{ \quad \quad \quad (-) \otimes^{L}_{B_{1}} (-)} & \mathcal{D}^{-}(B_{0} \otimes_{k} B_{2}^{op})
           }
$$
and
$$
\xymatrixcolsep{5pc}
\xymatrix {
     \mathcal{D}^{b}(A_{2} \otimes_{k} A_{1}^{op})^{op} \times \mathcal{D}^{b}(A_{2} \otimes_{k} A_{0}^{op}) \ar[d]_{(Y_{2} \otimes_{A_{2}}^{L} (-) \otimes_{A_{1}}^{L}X_{1}) \times (Y_{2} \otimes_{A_{2}}^{L} (-) \otimes_{A_{0}}^{L}X_{0})}
     \ar[r]^{\quad \quad \quad \quad  \mathrm{RHom}_{A_{2}}(-, -)} & \mathcal{D}^{+}(A_{1} \otimes_{k} A_{0}^{op}) \ar[d]^{Y_{1} \otimes_{A_{1}}^{L} (-) \otimes^{L}_{A_{2}}X_{2}}  \\
     \mathcal{D}^{b}(B_{2} \otimes_{k} B_{1}^{op})^{op} \times \mathcal{D}^{b}(B_{2} \otimes_{k} B_{0}^{op})   \ar[r]_{\quad \quad \quad \quad \quad \mathrm{RHom}_{B_{2}}(-, -)} & \mathcal{D}^{+}(B_{1} \otimes_{k} B_{0}^{op}).
           }
$$
\end{proposition}
We are now ready to prove Lemma \ref{Lemma:1}.
\begin{proof}[Proof of Lemma \ref{Lemma:1}]
Since $A$ is derived equivalent to $B$, by Proposition \ref{JR,BK},
there is a bounded complex of $A \otimes_{k} B^{op}$-modules $Q$,
a bounded complex of $B \otimes_{k} A^{op}$-modules $P$  such that $P \cong \mathrm{RHom}_{A}(Q, A)$ in $\mathcal{D}^b(B \otimes_{k} A^{op})$, $Q \otimes_{B}^{L} P \cong A$ in $\mathcal{D}^b(A^{e})$, and $P \otimes_{A}^{L} Q \cong B$ in $\mathcal{D}^b(B^{e})$,
then the functor
 $$
 P \otimes_{A}^{L} (-) \otimes_{A}^{L} Q: \mathcal{D}^b(A^{e}) \longrightarrow \mathcal{D}^b(B^{e})
 $$
 is a derived equivalence functor and  so is the functor
 $$
 P \otimes_{A}^{L} (-) \otimes_{A}^{L} Q: \mathcal{D}^b((A^{e})^{op}) \longrightarrow \mathcal{D}^b((B^{e})^{op}).
 $$
Suppose that $\mathrm{RHom}_{A^{e}} (A, A\otimes A) \cong A[-d]$ in $\mathcal{D}^b(A^{e})$. Using
the above derived equivalences and Proposition
\ref{Prop:JR}, let $A_{2} = A^{e}$, $B_{2} = B^{e}$, $A_{1} = B_{1} = k$, $A_{0} = (A^{e})^{op}$, $B_{0} = (B^{e})^{op}$, $Y_{1} = k$, $ X_{1}=k$, $Y_{2} = P\otimes Q^{op}$, $X_2=Q\otimes P^{op}$ and $X_0=Q^{op}\otimes P$; we have the commutative diagram of functors $$
\xymatrixcolsep{5pc}
\xymatrix {
     \mathcal{D}^{b}(A^{e}) \times \mathcal{D}^{b}(A^{e} \otimes_{k} A^{e}) \ar[d]_{(P\otimes Q^{op} \otimes_{A^{e}}^{L} (-)) \times (P\otimes Q^{op} \otimes_{A^e}^{L} (-) \otimes_{(A^{e})^{op}}^{L}Q^{op}\otimes P)}
     \ar[r]^{\quad \quad \quad \quad  \mathrm{RHom}_{A^{e}}(-, -)} & \mathcal{D}^{+}(A^{e}) \ar[d]^{ (-) \otimes^{L}_{A^e}Q\otimes P^{op}}  \\
     \mathcal{D}^{b}(B^{e}) \times \mathcal{D}^{b}(B^{e} \otimes_{k} B^{e})   \ar[r]_{\quad \quad \quad \quad \quad \mathrm{RHom}_{B^{e}}(-, -)} & \mathcal{D}^{+}(B^{e}).
           }
$$
Now, taking $A\in\mathcal{D}^{b}(A^{e})$ and $A\otimes_kA\in\mathcal{D}^{b}(A^{e} \otimes_{k} A^{e})$, we obtain \[\mathrm{RHom}_{A^{e}} (A, A\otimes_k A )\in\mathcal{D}^{b}(A^{e}), \]
\[ P\otimes Q^{op} \otimes_{A^{e}}^{L}A\cong P \otimes_{A}^{L} A \otimes_{A}^{L} Q \in \mathcal{D}^{b}(B^{e}),\]
and
\[
P\otimes Q^{op} \otimes_{A^e}^{L}A\otimes_k A  \otimes_{(A^{e})^{op}}^{L}Q^{op}\otimes P\cong P \otimes_{A}^{L} A \otimes_{A}^{L} Q \otimes_{k} P \otimes_{A}^{L} A \otimes_{A}^{L} Q\in\mathcal{D}^{b}(B^{e} \otimes_{k} B^{e}) .
\]

 Since the diagram commutes, we have: \begin{eqnarray*}
	&& \mathrm{RHom}_{B^{e}} (P \otimes_{A}^{L} A \otimes_{A}^{L} Q, P \otimes_{A}^{L} A \otimes_{A}^{L} Q \otimes_{k} P \otimes_{A}^{L} A \otimes_{A}^{L} Q)
	\\ &\cong &\mathrm{RHom}_{A^{e}} (A, A\otimes_k A )\otimes^{L}_{A^e}(Q\otimes P^{op})
	\\ &= & P \otimes_{A}^{L} \mathrm{RHom}_{A^{e}} (A, \otimes_k A) \otimes_{A}^{L} Q
	\\&\cong & P \otimes_{A}^{L} A[-d] \otimes_{A}^{L} Q.
\end{eqnarray*}
Note that
$A\cong Q \otimes_{B}^{L} P $ in $\mathcal{D}^b(A^{e})$ and $P \otimes_{A}^{L} Q\cong B $ in $\mathcal{D}^b(B^{e})$; the above identity gives
 $$
 \mathrm{RHom}_{B^{e}} (B, B \otimes_{k} B) \cong B[-d]
 $$
  in $\mathcal{D}^b(B^{e})$ (by Proposition \ref{JR,BK}).
\end{proof}
Now we proceed to prove the following lemma.
\begin{lemma}\label{Lemma:2}
Suppose $A$ and $B$ are two derived equivalent $k$-algebras. If $A$ viewed as an $A^e$-module has a bounded resolution of finitely generated projective $A^{e}$-modules, then so is $B$.
\end{lemma}




\begin{proof}
Suppose there is an isomorphism $\pi:X\rightarrow A$ in $\mathcal{D}^b(A^{e})$, where $X \in \mathcal{K}^{b}(P_{A^{e}})$.
By Propositions \ref{JR,BK}, we have the derived equivalence 
$$
P \otimes_{A}^{L} (-) \otimes_{A}^{L} Q: \mathcal{D}^b(A^{e}) \longrightarrow \mathcal{D}^b(B^{e}),
$$
and by Definition \ref{derived equivalence},
 $$
 \widetilde{F}: \mathcal{K}^{b}(P_{A^{e}}) \longrightarrow \mathcal{K}^{b}(P_{B^{e}}),
 $$
 is also a derived equivalence. Since an equivalence functor is always fully faithful and the morphisms $\pi$ in $ \mathcal{D}^b(A^{e})$ is an isomorphism, $\widetilde{F}(\pi):\widetilde{F}(X)\rightarrow\widetilde{F}(A)$ is an isomorphism in $\mathcal{D}^b(B^{e})$, that is to say, $\widetilde{F}(X) \in \mathcal{K}^{b}(P_{B^{e}})$ is quasi-isomorphic to $\widetilde{F}(A) = P \otimes_{A}^{L} A \otimes_{A}^{L} Q \cong B$  in $\mathcal{K}^b(B^{e})$.
\end{proof}

\begin{proof}[Proof of Theorem \ref{MainThm}]
Suppose $A$ is a $d$-Calabi-Yau algebra. Then
Lemmas
\ref{Lemma:1} and \ref{Lemma:2} exactly imply that $B$ satisfies the conditions in
Definition \ref{Def:CYalgebra} and is therefore a $d$-Calabi-Yau algebra. Thus, we have proved the theorem.
\end{proof}

\end{document}